\newtheorem{theorem}{Theorem}[section]
\newtheorem{corollary}[theorem]{Corollary}
\newtheorem{lemma}[theorem]{Lemma}
\newtheorem{remark}[theorem]{Remark}
\numberwithin{equation}{section}
\begin{document}
\title[Bounding the number of nodal domains on the square]{Bounding the number of nodal domains of eigenfunctions without singular points on the square}
\author{Junehyuk Jung}
\address{Department of Mathematics, Texas A\&M University, College Station, TX 77840}
\email{junehyuk@math.tamu.edu}
\begin{abstract}
We prove Polterovich's conjecture concerning the growth of the number of nodal domains for eigenfunctions on a unit square domain, under the assumption that the eigenfunctions do not have any singular points.
\end{abstract}
\thanks{We would like to thank Jean Bourgain for introducing the problem, and encouragement. We appreciate Stefan Steinerberger, Van Vu, Igor Wigman, Steve Zelditch, and Joon-Hyeok Yim for many helpful discussions. We also thank Bernard Helffer and the anonymous referee for detailed comments on the earlier version of the manuscript.}
\maketitle
\section{Introduction}
Let $\Omega \subset \mathbb{R}^2$ be a domain with a piecewise smooth boundary. Let $\{\phi_j\}_{j\geq 1}$ be an orthonormal Dirichlet eigenbasis with the eigenvalues $0<\lambda_1 < \lambda_2 \leq \lambda_3 \ldots$, i.e.,
\begin{align*}
-\Delta \phi_j &= \lambda_j \phi_j\\
\phi_j (x) &= 0 \text{ for } x\in  \partial \Omega\\
\langle \phi_j , \phi_k \rangle  &= \delta_{jk}
\end{align*}
where $\Delta = \partial_x^2+\partial_y^2$ is the Laplacian on $\mathbb{R}^2$, and $\delta_{jk}$ is the Kronecker delta. We assume that all $\phi_j$'s are real-valued.

Denoting by $N(\phi_j)$ the number of nodal domains (connected components of $\Omega$ minus the zero set of $\phi_j$), Courant's nodal domain theorem \cite{ch53} implies that
\begin{equation}\label{courant}
N(\phi_j) \leq j
\end{equation}
for all $j\geq 1$. This inequality is sharp, and we say that $\lambda_j$ a Courant-sharp Dirichlet eigenvalue, whenever the equality is satisfied by $\phi_j$. For instance, $\lambda_1$, $\lambda_2$ are Courant-sharp Dirichlet eigenvalues.

For large values of $j$, \eqref{courant} is not sharp anymore. In particular, Pleijel's theorem \cite{pl56} states that
\begin{equation}\label{pleijel}
\limsup_{j\to \infty} \frac{N(\phi_j)}{j} \leq \left(\frac{2}{j_{0,1}}\right)^2 = 0.69167\ldots,
\end{equation}
where $j_{0,1}$ is the smallest positive zero of the Bessel function, $J_0(x)$.

Note that when $\Omega$ is a unit square domain $D:=\lbrack 0,1 \rbrack ^2 \subset \mathbb{R}^2$, one can prove that
\[
\limsup_{j\to \infty} \frac{N(\phi_j)}{j} \geq \frac{2}{\pi} =0.63661\ldots
\]
by considering a sequence of Dirichlet eigenfunctions $\{\sin (k \pi x)\sin (k \pi y)\}_{k=1,2,\ldots}$.

In \cite{po09}, Polterovich observed that Pleijel's inequality should not be sharp, because it uses Faber--Krahn inequality which is only sharp on a disk, whereas not all nodal domains can simultaneously be disks. He also conjectured that the maximum number of nodal domains are obtained by the eigenfunctions $\sin (k \pi x)\sin (k \pi y)$ on $\Omega=D$, i.e.,
\begin{equation} \label{polterovich}
\limsup_{j\to \infty} \frac{N(\phi_j)}{j} \leq \frac{2}{\pi}.
\end{equation}

The idea that not all nodal domains can simultaneously be disks
was used by Bourgain \cite{bo15} to improve  Pleijel's inequality, see  also \cite{st14}. In \cite{bo15}, the packing density of disks, and a refined Faber--Krahn inequality are used to prove that
\[
\limsup_{j\to \infty} \frac{N(\phi_j)}{j} \leq \left(\frac{2}{j_{0,1}}\right)^2 - 3\cdot 10^{-9}.
\]
\cite{st14} utilizes a ``geometric uncertainty principle'' that quantifies the fact that if a partition is given by sets of equal measure, then the sets cannot be disks, and proves the existence of a small constant $\eta>0$, such that
\[
\limsup_{j\to \infty} \frac{N(\phi_j)}{j} \leq \left(\frac{2}{j_{0,1}}\right)^2 - \eta.
\]

The main purpose of this article is to present a new way of counting the number of nodal domains. In particular, we give a proof of Polterovich's conjecture \eqref{polterovich} for certain eigenfunctions on a unit square domain $D$. (We refer the readers to \cite{helffer15} for the case of irrational rectangles.)

\begin{theorem}\label{thm}
Let $0<\lambda_1 < \lambda_2 \leq \lambda_3 \leq \ldots$ be the complete spectrum of Laplacian on a unit square domain $D= \lbrack 0,1 \rbrack^2 \subset \mathbb{R}^2$. Let $\phi_j$ be a real valued Dirichlet eigenfunction on $D$ with the eigenvalue $\lambda_j$. Assume that $\phi_j$ does not have any interior singular point, where both the value and the gradient vanishes. Then we have
\[
\frac{N(\phi_j)}{j} \leq \frac{2}{\pi} + O(j^{-1/2}),
\]
as $j \to \infty$.
\end{theorem}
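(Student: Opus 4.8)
The plan is to normalize by Weyl's law and turn the statement into a purely topological bound on the (now smooth) nodal set. Write $K=\lambda_j/\pi^2=m^2+n^2$, so that $\phi=\phi_j=\sum_{m^2+n^2=K}c_{mn}\sin(m\pi x)\sin(n\pi y)$, and recall Weyl's law $j\sim \lambda_j/4\pi=\pi K/4$. Thus the desired inequality $N(\phi)/j\le \tfrac2\pi+O(j^{-1/2})$ is equivalent to the clean bound $N(\phi)\le \tfrac{K}{2}+O(\sqrt K)$. The hypothesis that $\phi$ has no interior singular point means $\nabla\phi\neq 0$ on $\{\phi=0\}\cap\mathrm{int}(D)$, so the interior nodal set $Z$ is a smooth $1$-manifold: a disjoint union of $a$ arcs with both endpoints on $\partial D$ and $c$ closed loops in the interior. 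Applying Euler's formula to the graph $Z\cup\partial D$ (the four corners together with the $b=2a$ boundary endpoints as vertices, plus a dummy vertex on each loop) gives exactly
\[
N(\phi)=1+a+c,
\]
which I would sanity-check against $\sin(\pi x)\sin(\pi y)$ ($a=c=0$, one domain) and $\sin(2\pi x)\sin(\pi y)$ ($a=1$, two domains).

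Next I would dispose of the arcs. An arc meets the edge $y=0$ where the normal derivative $\partial_y\phi(x,0)=\pi\sum_{m^2+n^2=K}c_{mn}\,n\,\sin(m\pi x)$ vanishes; this is a sine polynomial of degree $\le\sqrt K$, hence has $O(\sqrt K)$ zeros in $(0,1)$. Summing over the four edges yields $b=O(\sqrt K)$ and so $a=O(\sqrt K)=O(\sqrt j)$, which absorbs the arcs and the additive $1$ into the error term. Everything then reduces to the single estimate $c\le \tfrac{K}{2}+O(\sqrt K)$ for the number of closed loops. This is where the main term genuinely lives: perturbing the (honestly singular) function $\sin(k\pi x)\sin(k\pi y)$ within its eigenspace resolves the $\sim k^2$ grid crossings into $\sim k^2=K/2$ small loops, so any valid bound must saturate at $K/2$.

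To bound loops I would use total curvature. By Fenchel's theorem every simple closed curve satisfies $\int|\kappa|\,ds\ge 2\pi$, while arcs contribute nonnegatively, so $2\pi c\le \int_Z|\kappa|\,ds$, and it suffices to prove the \emph{sharp} total-curvature bound $\int_Z|\kappa|\,ds\le \pi K+O(\sqrt K)$. To see the constant I would pass to the integral-geometric identity
\[
\int_Z|\kappa|\,ds=\int_0^\pi\bigl|\,Z\cap\{\partial_{\mathbf u_\theta}\phi=0\}\,\bigr|\,d\theta,\qquad \mathbf u_\theta=(\cos\theta,\sin\theta),
\]
which follows from the area formula applied to the tangent-angle map along $Z$ (each point of $Z$ is counted for the unique $\theta$ with $\mathbf u_\theta\perp\nabla\phi$). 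The target thus becomes a bound on the average number of direction-$\theta$ tangencies, $\frac1\pi\int_0^\pi|S_\theta|\,d\theta\le K\bigl(1+o(1)\bigr)$ with $S_\theta=Z\cap\{\partial_{\mathbf u_\theta}\phi=0\}$; it is reassuring that $\tfrac2\pi=\frac1{2\pi}\int_0^{2\pi}|\cos\theta|\,d\theta$ is precisely the averaging constant one expects to surface from the circle symmetry.

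The main obstacle is this last estimate, the sharp average control of $|S_\theta|$. The difficulty is entirely in the constant. A crude B\'ezout or Bernstein count of the complex common zeros of $\phi$ and $\partial_{\mathbf u_\theta}\phi$ gives only $O(K)$ with a constant exceeding $1$ (the Newton polygon is inscribed in the circle $|\xi|=\sqrt K$, whose area can approach $\pi K$), and the pointwise bounds $|\kappa|=O(\sqrt K)$ and $\mathrm{length}(Z)=O(\sqrt K)$ lose a further large factor. The sharp constant must instead come from reality together with the special geometry that all frequencies $(\pm m,\pm n)$ lie on the single circle $|\xi|=\sqrt K$: restricted to a line in direction $\mathbf u_\theta$, the function $\phi$ is an exponential sum with frequencies $\sqrt K\cos(\alpha_i-\theta)$, where the $i$-th frequency vector is $\sqrt K(\cos\alpha_i,\sin\alpha_i)$, and I would count $S_\theta$ as the double zeros of this one-parameter family of slices, track how the slice-wise zero count changes, and average the resulting bound over $\theta$ so that the circle symmetry yields exactly $K$. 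Degeneracies (loops tangent to $\partial D$, non-generic critical slices, high multiplicity in the representation $K=m^2+n^2$) are routine to handle by perturbation and must be folded into the $O(\sqrt K)$ error.
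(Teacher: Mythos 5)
Your reduction is fine as far as it goes: the decomposition $N(\phi)=1+a+c$, the disposal of the boundary arcs via zeros of the sine polynomial $\partial_y\phi(x,0)$, and the identification of the main term as a bound $c\le K/2+O(\sqrt K)$ on the number of closed loops all match the structure of the paper's argument (the paper counts arc endpoints by zeros of $\phi$ on a slightly inset segment rather than by the normal derivative, but that is immaterial). The genuine gap is the loop count itself, which is the entire content of the theorem: you route it through Fenchel's theorem and the sharp total-curvature bound $\int_Z|\kappa|\,ds\le \pi K+O(\sqrt K)$, and then, as you yourself concede, you cannot prove the required sharp average tangency estimate $\frac1\pi\int_0^\pi|S_\theta|\,d\theta\le K(1+o(1))$ --- B\'ezout/Bernstein counts give a constant strictly larger than $1$, and no mechanism in your sketch actually extracts the constant from ``reality plus circle geometry.'' So the proposal is a reduction of a sharp inequality to another sharp inequality that remains unproved, and the latter looks at least as hard as the original problem.

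The paper avoids curvature entirely and gets the sharp constant from two elementary facts. First, take the mesh $M_\tau$ of roughly $\sqrt{\lambda}/\pi$ parallel vertical lines, spaced so that each cell of the mesh, as a thin rectangle, has first Dirichlet eigenvalue $\tau>\lambda$; by domain monotonicity no nodal domain can fit inside a cell, and since each mesh line separates $D$, every closed loop of $Z_\phi$ must cross the mesh at least \emph{twice}. Second, $\phi$ restricted to any vertical line $x=x_0$ is $\sum_n \bigl(\sum_m c_{mn}\sin(m\pi x_0)\bigr)\sin(n\pi y)$, a sine polynomial with at most $\sqrt{\lambda}/\pi$ sign changes. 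Hence $2c\le (\sqrt{\lambda}/\pi)\cdot(\sqrt{\lambda}/\pi)=K$, which is exactly the bound you were after, with the factor $2$ coming from the forced double crossing rather than from Fenchel. If you want to salvage your write-up, replace the whole total-curvature section by this mesh argument; the curvature route would only become interesting if someone proves the sharp tangency estimate, which the paper does not need and which you should not assume.
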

\begin{remark}
For any coprime integers $a,b$, and a real parameter $t\in \mathbb{R}$, $\sin (\pi a x) \sin (\pi b y) + t\sin (\pi b x) \sin (\pi a y)$ is a real valued Dirichlet eigenfunction on $D$ with the eigenvalue $\pi^2(a^2+b^2)$. It has no interior singular point for all but finitely many $t\in \mathbb{R}$.
\end{remark}
The main idea of the proof is to construct a mesh, where most of the connected components of the nodal set have to intersect at least twice. By counting the number of zeros of the eigenfunctions on the mesh, we find an upper bound for the number of connected components of the nodal set. When an eigenfunction does not have any singular point, the number of connected components of the nodal set $+1$ is the number of nodal domains on $D$, from which Theorem \ref{thm} follows. We describe details in the following section.

In Section \ref{3}, we demonstrate how one can improve the counting and go beyond \eqref{polterovich} by considering different types of meshes for eigenfunctions on a flat torus. 

In Section \ref{4}, we consider nodal domains of eigenfunctions on a closed surface. In particular, we prove a formula that relates the number of nodal domains with the number of singular points and the number of connected components of the nodal set. As an application, we give a quick proof of an upper bound for the number of singular points that is first obtained in \cite{df90} and \cite{dong92}.

\section{Proof of the main theorem}
Let $Z_\phi = \{0<x,y<1~:~ \phi(x,y) = 0\}$ be the interior nodal set. Then each connected component of $Z_\phi$ is either a closed curve or a segment touching the boundary $\partial D$. We label connected components of $Z_\phi$ those are segments  by $Z_{\phi,s} (1) , \ldots , Z_{\phi,s} (N_s)$, and those are closed curves by $Z_{\phi,c} (1) , \ldots, Z_{\phi,c} (N_c)$. Firstly, note that we have the following equality for the number of nodal domains of $\phi$
\[
N(\phi) = N_s+N_c+1.
\]

Now we consider a mesh $M$ in $D$ given by
\[
M_\tau = \bigcup_{k=1}^{\left\lfloor \frac{\sqrt{\tau-\pi^2}}{\pi}\right\rfloor} \left\{\left(\frac{\pi k}{\sqrt{\tau-\pi^2}},y\right) ~:~ 0\leq y \leq 1\right\},
\]
which is the nodal set of $\sin \sqrt{\tau-\pi^2} x \sin \pi y$. We may choose $\tau > \lambda$ such that $M_\tau$ intersect $Z_\phi$ transversally. (Note that $\tau$ can be chosen arbitrarily close to $\lambda$.)
\begin{lemma}
Any $Z_{\phi,c}(j)$ should intersect $M_\tau$ at least twice.
\end{lemma}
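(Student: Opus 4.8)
The plan is to combine a topological parity argument with an eigenvalue comparison. Since $\phi$ has no interior singular point, each $Z_{\phi,c}(j)$ is a smooth simple closed curve $\gamma$ lying in the open square, which by the Jordan curve theorem bounds a region $U$ with $\overline{U}\subseteq \text{int}(D)$. First I would record the purely topological fact: because $M_\tau$ meets $\gamma$ transversally and each mesh line $L=\{c\}\times[0,1]$ has both endpoints on $\partial D$, hence outside $U$, traversing $L$ from bottom to top must cross $\gamma$ an even number of times. Consequently, if $\gamma$ meets any single mesh line it meets it at least twice, so it remains only to show that $\gamma$ meets $M_\tau$ at all.

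Second, suppose for contradiction that $\gamma\cap M_\tau=\emptyset$. Then, being connected and disjoint from the vertical lines, $\gamma$ lies in one open strip $S=(c_{k-1},c_k)\times(0,1)$ between consecutive mesh lines (including the two boundary strips adjacent to $x=0$ and $x=1$), of width at most $w:=\pi/\sqrt{\tau-\pi^2}$. Since the vertical lines bounding $S$ lie in the unbounded complementary component of $\gamma$, the interior region $U$ is trapped in $S$ as well; hence any nodal domain $\Omega'$ meeting $U$ is in fact contained in $U\subseteq S$, and at least one such $\Omega'$ exists because $U$ has positive area.

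Third, I would invoke that on a nodal domain $\phi$ has constant sign, so $\phi|_{\Omega'}$ is a ground state and $\lambda_1(\Omega')=\lambda$. Domain monotonicity of the first Dirichlet eigenvalue together with $\Omega'\subseteq(c_{k-1},c_k)\times(0,1)$ then gives
\[
\lambda=\lambda_1(\Omega')\ge \lambda_1\big((c_{k-1},c_k)\times(0,1)\big)=\frac{\pi^2}{w^2}+\pi^2=(\tau-\pi^2)+\pi^2=\tau,
\]
contradicting the choice $\tau>\lambda$. Therefore $\gamma$ must meet $M_\tau$, and by the first step it does so at least twice.

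The main obstacle, and the point on which the whole argument turns, is the geometric step: one must confine the adjacent nodal domain to a strip thin enough that eigenvalue monotonicity bites. What makes this work with $\tau$ only slightly larger than $\lambda$ is that the strip also inherits the unit height of $D$, so the rectangle $(c_{k-1},c_k)\times(0,1)$ supplies an extra $\pi^2$ and its first eigenvalue is exactly $\tau$; using the width alone would only yield $\tau-\pi^2$ and would fail to close the argument. The remaining care is bookkeeping: justifying $U\subseteq S$ from the Jordan curve theorem and checking that the two end strips, whose widths are at most $w$, only strengthen the bound.
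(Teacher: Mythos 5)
Your proof is correct and follows essentially the same route as the paper's: an eigenvalue-monotonicity contradiction (a nodal domain trapped in a strip of $D - M_\tau$ would force $\lambda \geq \tau$) to show the intersection is nonempty, plus a parity/Jordan-curve argument to upgrade one intersection to two. You have simply spelled out the details that the paper's terse proof leaves implicit, including the computation $\lambda_1\bigl((c_{k-1},c_k)\times(0,1)\bigr) \geq \tau$.
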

\begin{proof}
If some $Z_{\phi,c}(j)$ does not intersect $M_\tau$, then we have a connected component of $D - M_\tau$ that contains a nodal domain of $\phi$. This is contradiction to $\tau>\lambda$. Since $Z_{\phi,c}(j)$ is a closed curve and any connected component of $M_\tau$ divides $D$ into two,  the intersection has to occur at least twice.
\end{proof}

\begin{lemma}\label{lem}
$\phi$ has at most $\frac{\sqrt{\lambda}}{\pi}$ sign changes on any fixed component of $M_\tau$.
\end{lemma}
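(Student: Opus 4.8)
The plan is to restrict $\phi$ to a single component of the mesh, recognize the restriction as a one–variable sine polynomial of controlled degree, and then bound its number of sign changes by the number of zeros of such a polynomial. First I would fix a component of $M_\tau$, which is a vertical segment $\{x=x_0\}$ with $x_0 = \frac{\pi k}{\sqrt{\tau-\pi^2}}$, and set $u(y) = \phi(x_0,y)$ for $y\in[0,1]$. Since $D$ is the unit square, the $\lambda$–eigenspace of the Dirichlet Laplacian is spanned by the product eigenfunctions $\sin(m\pi x)\sin(n\pi y)$ with $\pi^2(m^2+n^2)=\lambda$, so
\[
\phi(x,y) = \sum_{\pi^2(m^2+n^2)=\lambda} c_{mn}\,\sin(m\pi x)\sin(n\pi y).
\]
Restricting to $x=x_0$ and grouping by the $y$–frequency gives $u(y) = \sum_{n} a_n \sin(n\pi y)$ with $a_n = \sum_{m} c_{mn}\sin(m\pi x_0)$, a finite sine polynomial in $y$.

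Next I would record the degree bound. Because every product eigenfunction appearing above has $x$–frequency $m\geq 1$, any admissible $y$–frequency satisfies $n^2 \leq \frac{\lambda}{\pi^2}-1 < \frac{\lambda}{\pi^2}$; hence, writing $N$ for the largest such $n$, we have $N < \frac{\sqrt{\lambda}}{\pi}$ and $u$ is a sine polynomial of degree at most $N$.

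The last and central step is to count sign changes of $u$ on the segment, which is bounded by the number of its interior zeros. Here I would use the Chebyshev identity $\sin(n\pi y) = \sin(\pi y)\,U_{n-1}(\cos\pi y)$, where $U_{n-1}$ is the Chebyshev polynomial of the second kind, to factor
\[
u(y) = \sin(\pi y)\, R(\cos \pi y), \qquad R(s) = \sum_{n=1}^{N} a_n\, U_{n-1}(s),
\]
where $R$ is a real polynomial of degree at most $N-1$. On the open interval $(0,1)$ we have $\sin(\pi y)\neq 0$, and $y\mapsto \cos\pi y$ is a strictly monotone bijection onto $(-1,1)$, so the interior zeros of $u$ are in bijection with the zeros of $R$ in $(-1,1)$; since $R$ has at most $N-1$ real roots, $u$ undergoes at most $N-1 < \frac{\sqrt{\lambda}}{\pi}$ sign changes on the component.

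The routine parts are the spectral decomposition on the square and the frequency bookkeeping; the part deserving the most care is the zero count for the restricted function, and the Chebyshev reduction to an ordinary polynomial in $\cos\pi y$ is the clean way to obtain the sharp bound $N-1$ (an alternative is the classical fact that a trigonometric polynomial of degree $N$ has at most $2N$ zeros per period, combined with the oddness of $\sum a_n\sin(n t)$). One should also note the degenerate case $u\equiv 0$, for which the conclusion holds trivially.
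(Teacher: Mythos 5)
Your proof is correct and takes essentially the same route as the paper: the paper's step of dividing the restricted sine polynomial by $e^{\pi i y}-e^{-\pi i y}$ to get a polynomial of degree $<\frac{\sqrt{\lambda}}{\pi}$ in $e^{\pi i y}+e^{-\pi i y}=2\cos \pi y$ is exactly your Chebyshev factorization $\sin(n\pi y)=\sin(\pi y)\,U_{n-1}(\cos\pi y)$. Both arguments then bound the sign changes by the number of real roots of that polynomial, using that $\sin \pi y\neq 0$ and $y\mapsto \cos\pi y$ is monotone on $(0,1)$.
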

\begin{proof}
$\phi$ restricted to any connected component of $M_\tau$ is a linear combination of $2i \sin (\pi m y)=e^{\pi i m y}-e^{-\pi i m y}$ where $1\leq m \leq \frac{\sqrt{\lambda}}{\pi}$, which is a degree $ < \frac{\sqrt{\lambda}}{\pi}$ polynomial in $e^{\pi i y}+e^{-\pi i y}$ when divided by $e^{\pi i  y}-e^{-\pi i  y}$. Therefore it has at most $\frac{\sqrt{\lambda}}{\pi}$ sign changes.
\end{proof}

From these two lemma, we conclude that
\begin{equation}\label{eq1}
2N_c \leq  \frac{\sqrt{\tau-\pi^2}}{\pi} \frac{\sqrt{\lambda}}{\pi},
\end{equation}
for any $\tau>\lambda$.

In order to bound $N_s$, we first note that each $Z_{\phi,s}$ has two end points on $\partial D$. Also note that there exists a sufficiently small $\epsilon>0$, such that the number of end points on
\[
\{(0,y)~:~0 \leq y \leq 1\}
\]
is equal to the number of zeros of $\phi$ on
\[
\{(\epsilon,y)~:~0 < y < 1\}.
\]
It follows from the same argument in Lemma \ref{lem} applied to this segment that there are at most $\frac{\sqrt{\lambda}}{\pi}$ end points of $Z_{\phi,s}$ on
\[
\{(0,y)~:~0 \leq y \leq 1\},
\]
hence there are at most $\frac{4\sqrt{\lambda}}{\pi}$ end points on $\partial D$. Therefore
\[
N_s \leq \frac{2\sqrt{\lambda}}{\pi},
\]
and combining with \eqref{eq1}, we have
\[
N(\phi) \leq \frac{\lambda}{2\pi^2} + \frac{2\sqrt{\lambda}}{\pi}+1.
\]
Now Theorem \ref{thm} follows from the Weyl law $ j = \frac{\lambda}{4\pi}+O(\sqrt{\lambda})$.

\section{Further development}\label{3}
To begin with, we define $\mathcal{N}(\theta)$ to be the set of positive integer $n$ such that there exists a sector $S_{\alpha, \theta}:=\{(r,\varphi) \in \mathbb{R}^2~:~ \alpha<\varphi <\alpha+\theta, r\geq 0\}$ for some $\alpha$ so that no solution $(a,b) \in \mathbb{Z}^2$ to $a^2+b^2=n$ is contained in $S_{\alpha, \theta}$. For instance, if the number of distinct solutions to $a^2+b^2=n$ is less than $\frac{2\pi}{\theta}$, then $n$ is in $\mathcal{N}(\theta)$.
\begin{theorem}\label{ext}
Fix $\theta$ and $\epsilon$, such that $\theta>\epsilon>0$. Let $0<\lambda_1 < \lambda_2 \leq \lambda_3 \leq \ldots$ be the complete spectrum of Laplacian on a flat torus $\mathbb{T}= \mathbb{R}^2/\mathbb{Z}^2$. Let $\phi_j$ be a real valued Laplacian eigenfunction on $\mathbb{T}$ with the eigenvalue $\lambda_j$. Assume that $\phi_j$ does not have any singular point, and assume further that $n_j:=\lambda_j/(4\pi^2) \in \mathcal{N}(2\theta)$. Then we have that
\[
\frac{N(\phi_j)}{j} \leq \frac{2}{\pi} \cos (\theta-\epsilon) +O_\epsilon(j^{-1/2}).
\]
\end{theorem}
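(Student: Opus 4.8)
The plan is to transplant the proof of Theorem \ref{thm} to $\mathbb{T}$, replacing the axis-parallel mesh by a family of parallel closed geodesics whose direction is dictated by the angular gap in the representations of $n_j$. Write $n=n_j$, $\lambda=\lambda_j=4\pi^2 n$, and $\phi=\phi_j=\sum_{a^2+b^2=n}c_{a,b}e^{2\pi i(ax+by)}$; since $\phi$ is real, the frequency vectors $(a,b)$ occur in antipodal pairs. I would first record the topological count: with no singular points $Z_\phi$ is a disjoint union of $c$ smooth circles, and cutting $\mathbb{T}$ along them produces closed nodal domains $\bar R_1,\dots,\bar R_N$ with $\sum_i\chi(\bar R_i)=\chi(\mathbb{T})=0$ (each circle is shared by two regions and $\chi(S^1)=0$). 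Writing $\chi(\bar R_i)=2-2g_i-b_i$ and using $\sum_i b_i=2c$ gives $N=c+\sum_i g_i$; since disjoint subsurfaces of a genus-one surface have total genus at most $1$, this yields $N(\phi)\le c+1$.

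Next I would split $Z_\phi$ into its non-contractible and contractible circles. Disjoint non-contractible simple closed curves on $\mathbb{T}$ are mutually parallel, so they all cross a fixed transversal geodesic; their number is therefore at most the number of sign changes of $\phi$ along that geodesic, which by the argument of Lemma \ref{lem} is $O(\sqrt\lambda)$, and this contribution is absorbed into the error term just as $N_s$ was. The heart of the matter is the number $c_0$ of contractible circles. For these I would use the mesh $M=\{\sin(2\pi(px+qy))=0\}$, a union of parallel geodesics of total length $2\sqrt{p^2+q^2}$ with consecutive lines at distance $w=1/(2\sqrt{p^2+q^2})$, for integers $(p,q)$ chosen as follows. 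By $n\in\mathcal{N}(2\theta)$ there is an empty angular sector of width $2\theta$ among the $(a,b)$, and by antipodal symmetry its opposite is empty as well, so all frequency vectors lie in two arcs of width $\pi-2\theta$. I would pick $(p,q)$ with direction within $\epsilon$ of the perpendicular to the common axis of these arcs and with $p^2+q^2$ just larger than $n$; then the mesh direction $\mathbf v$ (perpendicular to $(p,q)$) makes an angle in $[\theta-\epsilon,\pi-\theta+\epsilon]$ with every $(a,b)$, so the frequency of $\phi$ along each mesh line is $\max_{a,b}|(a,b)\cdot\mathbf v|\le\sqrt n\cos(\theta-\epsilon)$.

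The trapping step mirrors Section 2: since $p^2+q^2>n$ we have $\pi^2/w^2=4\pi^2(p^2+q^2)>\lambda$. A contractible nodal circle $\gamma$ disjoint from $M$ would bound a disk; that disk cannot contain a full (non-contractible) mesh line nor be entered by a mesh arc without $\gamma$ meeting $M$, so it sits inside one band of width $w$, and the adjacent nodal domain, being simply connected, embeds in an infinite strip of width $w$ and hence has first Dirichlet eigenvalue $\ge\pi^2/w^2>\lambda$, a contradiction. Thus every contractible $\gamma$ meets $M$, and having zero homological intersection with each mesh line it must cross $M$ at least twice. Summing the sign-change bound of Lemma \ref{lem} (now with frequency $\le\sqrt n\cos(\theta-\epsilon)$) over the mesh gives
\[
2c_0\le 2\sqrt{p^2+q^2}\cdot 2\sqrt n\cos(\theta-\epsilon),
\]
so with $\sqrt{p^2+q^2}=\sqrt n(1+o(1))$ one gets $c_0\le 2n\cos(\theta-\epsilon)(1+o(1))$. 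Collecting everything, $N(\phi)\le c_0+O(\sqrt\lambda)+1\le 2n\cos(\theta-\epsilon)+O(\sqrt n)$, and the Weyl law $j\sim\pi n$ on $\mathbb{T}$ converts this into the claimed bound $N(\phi_j)/j\le(2/\pi)\cos(\theta-\epsilon)+O_\epsilon(j^{-1/2})$.

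The hard part, I expect, is producing the integer vector $(p,q)$ with simultaneously the right direction (within $\epsilon$) and magnitude $p^2+q^2$ only marginally above $n$: this is a lattice-point problem in a thin annular sector, and one must check that for large $n$ the sector of angular width $2\epsilon$ and radial thickness $\asymp 1$ has area $\gg 1$ and so contains a lattice point, with the residual $\sqrt{p^2+q^2}/\sqrt n-1=O(n^{-1/2})$ matching $O_\epsilon(j^{-1/2})$. The other delicate points are justifying that a contractible disk cannot swallow a non-contractible mesh geodesic (so that trapping is valid) and controlling the non-contractible components; both are topological rather than analytic, but they are where the torus genuinely differs from the square.
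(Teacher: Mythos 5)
Your proposal follows the same core strategy as the paper's proof of Theorem \ref{ext}: build a mesh of parallel closed geodesics running in the direction of the empty $2\theta$-sector, so that $\phi_j$ restricted to each mesh component is a trigonometric polynomial whose frequencies carry the factor $\cos(\theta-\epsilon)$; use the first Dirichlet eigenvalue of the strips between consecutive mesh lines to force every nodal circle to cross the mesh at least twice; count zeros; convert components to domains via $N(\phi)\leq C(\phi)+1$ (no singular points); finish with Weyl's law. The differences are in implementation. The paper takes $(p,q)$ from a \emph{fixed finite} set $P(\epsilon)$ of bounded norm (an $\epsilon$-net of directions), uses $\lceil 1/(q\tau)\rceil$ parallel shifts of the single geodesic $C(p,q)$, and adjoins the two coordinate circles $E$ to the mesh, which makes every complementary cell simply connected and so handles contractible and non-contractible nodal components uniformly; you instead use the two long fibers of $\sin(2\pi(px+qy))$ with $p^2+q^2$ just above $n$, treat non-contractible circles separately by a transversal-geodesic count (correct, and absorbed in the $O(\sqrt{\lambda})$ error), and prove $N\leq c+1$ by a direct Euler-characteristic computation instead of the paper's graph lemma (Lemma \ref{lemma1}). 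Geometrically the two meshes are essentially the same family of parallel geodesics with spacing $\approx \pi/\sqrt{\lambda_j}$ and total length $\approx 2\sqrt{n_j}$, and your strict inequality $p^2+q^2>n$ even avoids the equality edge case in the paper's trapping step.

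There are, however, two concrete weak points. First, your prescription of $(p,q)$ contradicts the frequency bound you then assert: if $(p,q)$ is within $\epsilon$ of the \emph{perpendicular} to the common axis of the two occupied arcs, then $(p,q)$ points into the empty sector, and the mesh direction $\mathbf{v}\perp(p,q)$ runs through the middle of the occupied arcs, so $\max_{a,b}|(a,b)\cdot\mathbf{v}|$ can be as large as $\sqrt{n}$ rather than $\leq\sqrt{n}\cos(\theta-\epsilon)$. What your stated angle range $[\theta-\epsilon,\pi-\theta+\epsilon]$ actually requires is the opposite configuration: the normal $(p,q)$ within $\epsilon$ of the occupied arcs' axis, so that the mesh lines themselves point into the empty sector (this is exactly the paper's configuration, where the mesh lines run along $(p,q)\in P(\epsilon)$ chosen inside the empty sector). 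Second, the lattice-point step is not justified as written: ``area $\gg 1$ hence contains a lattice point'' is false for thin regions (a strip of width $1/10$ and length $100$ can miss $\mathbb{Z}^2$). It is fixable --- e.g.\ for $n$ large an annular sector of radial thickness $2$ contains a disk of radius $\sqrt{2}/2$, the covering radius of $\mathbb{Z}^2$ --- but the cleanest repair is to take a primitive $(p_0,q_0)$ of bounded norm within $\epsilon/2$ of the desired direction and use the multiple $(p,q)=m(p_0,q_0)$ with $m|(p_0,q_0)|\in(\sqrt{n},\sqrt{n}+|(p_0,q_0)|]$, which has exactly the right direction and $\sqrt{p^2+q^2}=\sqrt{n}+O_\epsilon(1)$; this is in effect how the paper's fixed set $P(\epsilon)$ with many shifts removes the lattice-point problem altogether. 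With these two repairs (plus a generic translation of the mesh to ensure transversality), your argument goes through and gives the same bound as the paper.
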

\begin{remark}
For any pair of integers $a$ and $b$, $\cos 2\pi(ax+by)$ is an eigenfunction that has no singular point. (In fact, in each eigenspace $E_\lambda$, the set of eigenfunctions with at least one singular point has codimension $\geq 1$ \cite[Lemma 2.3]{wigman}.)  So the Theorem applies to $\cos 2\pi (ax+by)$ with $\theta = \pi/9$, whenever $a^2+b^2=p$ is a prime $\equiv 1 \pmod{4}$, since $x^2+y^2=p$ has exactly $8$ integral solutions.
\end{remark}
\begin{remark}
The existence of a sequence of eigenfunctions on $\mathbb{T}$ for which we have stronger upper bound than \eqref{polterovich} was known in \cite{bo14}. In \cite{bo14}, Bourgain adopted the idea of Nazarov--Sodin \cite{ns} to a certain sequence of eigenfunctions on $\mathbb{T}$ and proved that
\[
\frac{N(\phi_j)}{j} \sim \kappa
\]
for such eigenfunctions $\phi_j$, where $\kappa>0$ is the constant given in \cite{ns} (it is known that $\kappa \leq \frac{1}{\sqrt{2}\pi} <\frac{2}{\pi}$ \cite{ber,K}). One of the necessary conditions that the eigenfunctions $\phi_j$ should satisfy is that the set
\[
\left\{\left(\frac{a}{\sqrt{a^2+b^2}},\frac{b}{\sqrt{a^2+b^2}}\right)~:~a,b\in \mathbb{Z}, ~n_j =a^2+b^2\right\}
\]
being equidistributed on the unit circle as $j\to \infty$.  Note that this is mutually exclusive to the condition that the eigenfunctions in Theorem \ref{ext} satisfy.
\end{remark}

\begin{proof}
We first fix a finite set of points $P(\epsilon)$ in $\mathbb{Z}^2$ such that $S_{\alpha, \epsilon}$ contains at least one element of $P(\epsilon)$ for any $\alpha$. We may assume without loss of generality that $n_j>x^2+y^2$ for any $(x,y) \in P(\epsilon)$.

From the assumption that $n_j \in \mathcal{N}(2\theta)$, there exists a point $(p,q) \in P(\epsilon)$ such that
\[
|(a,b)(p,q)| \leq \sqrt{n_j}\sqrt{p^2+q^2} \cos (\theta -\epsilon)
\]
for any $(a,b)$ with $a^2+b^2 = n_j$.

Writing
\[
\phi_j (x,y)= \sum_{a^2+b^2 =n_j} \alpha_{ab} e^{2\pi i (a,b)(x,y)},
\]
we see that for any fixed $\tau \geq 0$,
\[
\phi_j (p(t-\tau), qt) = \sum_{a^2+b^2 =n_j} \alpha_{ab}e^{ - 2\pi i a\tau p} e^{2\pi i (a,b)(p,q) t}
\]
is a trigonometric polynomial of length $\leq 2\sqrt{n_j}\sqrt{p^2+q^2} \cos (\theta-\epsilon) +1$.

Now let $C(p,q)$ be a closed geodesic on $\mathbb{T}$, that is an image of
\[
\{(pt, qt) ~:~ 0\leq t\leq 1\} \in \mathbb{R}^2.
\]
For any small $\tau>0$ we let $M(p,q)_\tau$ be the union of shifts of $C(p,q)$,
\[
C(p,q)+ (\tau,0),~ C(p,q)+ (2\tau,0) , \cdots, ~C(p,q) + \left(\left\lceil \frac{1}{q\tau}\right\rceil \tau,0\right),
\]
and the image of
\[
E=\{(x,0)~:~0\leq x \leq 1\} \cup \{(0,y)~:~0\leq y \leq 1\} \subset \mathbb{R}^2
\]
in $\mathbb{T}$.

Considering
\[
\sin \frac{\pi}{\tau}(x - \frac{p}{q}y),
\]
we see that the region in $\mathbb{T}$ bounded by $C(p,q)+k\tau$ and $C(p,q)+(k+1)\tau$ has the first Dirichlet eigenvalue
\[
\frac{\pi^2}{\tau^2} (1+\frac{p^2}{q^2}).
\]
Therefore if we choose $\tau$ such that
\[
\frac{\pi^2}{\tau^2} (1+\frac{p^2}{q^2}) = \lambda_j,
\]
then any connected component of the nodal set of $\phi_j$ should intersect $M(p,q)_\tau$ at least twice.

Denoting by $C(\phi_j)$ the number of connected components of the nodal set of $\phi_j$, we have
\[
2C(\phi_j) \leq 2 \sqrt{n_j}\sqrt{p^2+q^2} \cos (\theta-\epsilon) \left\lceil \frac{1}{q\tau} \right\rceil + 4\sqrt{n_j}
\]
where the latter term amounts to the intersection between the nodal set and $E$.

Note that when $\phi_j$ has no singular points, we have $C(\phi_j)+1 \geq N(\phi_j)$, we therefore conclude that
\begin{align*}
N(\phi_j) &\leq  \sqrt{n_j}\sqrt{p^2+q^2} \cos (\theta-\epsilon) \left\lceil \frac{1}{q\tau} \right\rceil + 2\sqrt{n_j}\\
&< \sqrt{n_j}\sqrt{p^2+q^2} \cos (\theta-\epsilon) \left( \frac{1}{q\tau} +1\right)+2\sqrt{n_j}\\
&<\frac{\sqrt{n_j \lambda_j}}{\pi} \cos (\theta-\epsilon) + O_\epsilon (n_j^{1/2})\\
&=\frac{\lambda_j}{2\pi^2} \cos (\theta-\epsilon) +O_\epsilon (\lambda_j^{1/2}),
\end{align*}
and the theorem follows from the Weyl law $ j = \frac{\lambda_j}{4\pi}+O(\sqrt{\lambda_j})$.
\end{proof}
\section{On closed surfaces}\label{4}
We first prove an inequality for a graph embedded on a closed surface.
\begin{lemma}\label{lemma1}
Let $G$ be a graph embedded on a closed surface $M$ with genus $\mathfrak{g}$. Let $v$, $e$, $f$, and $c$ be the number of vertices, edges, faces, and connected components of $G$. Then we have
\[
1 \geq v-e+f-c \geq 1-2\mathfrak{g}.
\]
\end{lemma}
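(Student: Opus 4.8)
The plan is to reduce the surface statement to the classical Euler formula by passing to a cellular decomposition and tracking how the connected components interact. First I would recall the generalized Euler--Poincar\'e relation for a graph $G$ embedded in a closed surface $M$ of genus $\mathfrak{g}$. The cleanest approach is to consider the two extreme configurations separately, since the quantity $v-e+f-c$ measures how far $G$ is from cutting $M$ into genuinely cellular pieces.

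For the upper bound $v-e+f-c \le 1$, I would argue as follows. Each connected component $G_i$ of $G$, together with the faces incident to it, satisfies the standard inequality $v_i - e_i + f_i \le 2$ when $G_i$ is embedded in a sphere, but on a higher genus surface the relevant fact is that a \emph{connected} graph on a surface satisfies $v_i - e_i + f_i \le 2$ with equality exactly when the embedding is cellular and the component alone fills a sphere. Summing over the $c$ components and accounting for the fact that faces may be shared or may be non-cellular (annuli, higher genus pieces), I expect the total to be controlled from above by $1 + c$, which after subtracting $c$ gives the clean bound $1$. The key subtlety here is that faces of $G$ need not be disks; a face can be an annulus or carry topology, and this is precisely what prevents $v - e + f$ from simply equalling $\chi(M) + c = 2 - 2\mathfrak{g} + c$.

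For the lower bound $v - e + f - c \ge 1 - 2\mathfrak{g}$, I would refine the embedding to a cellular one. The plan is: add auxiliary edges and vertices to $G$ to produce a \emph{cellular} graph $G'$ on $M$, for which the exact Euler formula $v' - e' + f' = \chi(M) = 2 - 2\mathfrak{g}$ holds. Adding an edge (with its endpoints already present) either merges two faces into one, or splits one face into two while possibly decreasing the number of connected components; in either case one tracks the effect on $v - e + f - c$. The crucial monotonicity claim is that each such elementary move can only \emph{increase} the quantity $v - e + f - c$ relative to its value on the cellular refinement, or conversely that subdividing non-cellular faces lowers $c$ and raises $f$ in a controlled way. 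Starting from $v' - e' + f' - c' = 2 - 2\mathfrak{g} - c'$ with $c' \ge 1$ and working backwards to $G$, I would show the quantity stays $\ge 1 - 2\mathfrak{g}$.

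The main obstacle I anticipate is handling non-cellular faces correctly, since the inequality $v - e + f - c \ge 1 - 2\mathfrak{g}$ degrades exactly by the genus absorbed into such faces. A careful way around this is an induction on the number of connected components: reduce to the case $c = 1$ by observing that merging two components via a single new edge changes $v - e + f - c$ by a predictable amount (it increases $e$ by one and decreases $c$ by one, leaving $v - e + f - c$ unchanged if no face is split, or otherwise adjusting $f$), thereby reducing to the connected case where one invokes the standard bounds $2 - 2\mathfrak{g} \le v - e + f \le 2$ directly. I would make sure the base case $c = 1$ is exactly the classical statement that a connected graph $G$ on $M$ has $\chi(M) \le v - e + f \le 2$, with the left inequality from cellularity of the refinement and the right from the sphere bound.
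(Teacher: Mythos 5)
Your headline plan for the upper bound does not work as stated, but the fallback in your last paragraph does, and it is a genuinely different route from the paper's. The gap in the first plan: after summing the per-component bounds $v_i-e_i+f_i\le 2$ (with $f_i$ the number of faces of the component $G_i$ embedded alone), you must compare $\sum_i f_i$ with $f$, and the inequality you implicitly need, $\sum_i f_i - f \ge c-1$, is false on higher-genus surfaces. Take two disjoint parallel essential circles on the torus: each circle alone has one face (an annulus), and together they cut the torus into two annuli, so $\sum_i f_i - f = 0$ while $c-1=1$. The lemma still holds there ($v-e+f-c=2-2+2-2=0$) only because the per-component inequality is strict ($v_i-e_i+f_i=1<2$); the slack coming from non-cellular components and the slack coming from shared faces cannot be decoupled, so no bookkeeping of face-sharing alone will rescue the summation.

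The induction on $c$ in your final paragraph, however, is a sound and self-contained strategy, and it differs from the paper's argument. If $c\ge 2$, two distinct components are incident to a common face (the component--face incidence structure is connected because $M$ is), and joining them by an edge inside that face increases $e$ by one, decreases $c$ by one, and leaves $f$ unchanged, since a properly embedded arc with endpoints on two distinct boundary circles of a face never disconnects it; hence $v-e+f-c$ is preserved exactly, and after $c-1$ such additions you are in the connected case, where $2-2\mathfrak{g}\le v-e+f\le 2$ yields both inequalities at once. Two caveats. First, the right-hand bound $v-e+f\le 2$ for a connected graph on a genus-$\mathfrak{g}$ surface is not ``the sphere bound'': it needs its own argument (e.g., contract a spanning tree and note that each loop addition increases $f$ by at most one), and this is exactly where the real content of the upper bound lives; the paper supplies it instead by an induction on $c+\mathfrak{g}$, cutting $M$ along an essential curve inside a non-disk face and capping off the boundary circles. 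Second, the paper does not prove the lower bound at all (it cites (6.1) of \cite{jz2016}), so your cellular-refinement argument --- adding edges never increases $v-e+f-c$, and a cellular refinement realizes $1-2\mathfrak{g}$ --- is a welcome self-contained proof of that half. In sum, your approach replaces the paper's surface surgery by graph modifications inside a fixed surface, at the cost of having to establish the two-sided Euler inequality for connected embedded graphs.
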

\begin{proof}
The right hand side is (6.1) of \cite{jz2016}. To prove the left hand side inequality, we first note that when $\mathfrak{g}=0$, we have the equality
\[
v-e+f-c=1.
\]
We are going to prove the other cases by induction on $c+\mathfrak{g}$. Assume that the inequality is true for all $c$ and $\mathfrak{g}$ such that $c+\mathfrak{g} =N$. For a pair of a graph $G$ and a closed surface $M$, we consider two cases:
\begin{enumerate}
  \item Every face is homeomorphic to a disk.
  \item Otherwise.
\end{enumerate}
In the first case, we have $c=1$, and $v-e+f =2-2\mathfrak{g}$ (Euler's theorem), so the inequality is satisfied.

In the second case, we can find a face of $G$ that contains a closed curve $C$ that is not contractible within the face. $M-C$ is a surface with two boundary components, so we consider a closed surface $\tilde{M}$ obtained by attaching two caps on each boundary component. If $\tilde{M}$ is connected, then we have $v=\tilde{v}$, $e=\tilde{e}$, $c=\tilde{c}$,  $\tilde{f} \geq f$, and $\tilde{\mathfrak{g}}=\mathfrak{g}-1$, hence by the induction hypothesis $\tilde{v}-\tilde{e}+\tilde{f}-\tilde{c} \leq 1$, we see that $v-e+f-c \leq 1$.

If $\tilde{M}$ is disconnected, then we have two graphs embedded on surfaces: $(G_1, \tilde{M}_1)$ and $(G_2,\tilde{M}_2)$. If both $G_1$ and $G_2$ are non-empty, then $c_j+\mathfrak{g_j} \leq N$ for $j=1,2$, hence
\[
v_j -e_j+f_j - c_j \leq 1
\]
for $j=1,2$ by the induction hypothesis. Now because $v=v_1+v_2$, $e=e_1+e_2$, $f=f_1+f_2-1$, and $c=c_1+c_2$, we conclude that
\[
v-e+f-c \leq 1.
\]
Now, we consider the case that $G_2$ is empty. In this case, $\mathfrak{g}_2$ has to be greater than equal to $1$, because of the assumption that $C$ is not contractible. Hence $\mathfrak{g}_1 \leq \mathfrak{g} - 1$, and so we can apply the induction hypothesis to $(G_1, \tilde{M}_1)$. Because $v=v_1$, $e=e_1$, $f=f_1$, and $c=c_1$, this completes the proof.
\end{proof}

This lemma allows one to express the number of nodal domains in terms of the number of connected components of the nodal set, and the order of vanishing of singular points. 
\begin{theorem}
Let $M$ be a smooth compact Riemannian surface. Let $\phi$ be a real valued Laplacian eigenfunction on $M$. We denote by $C(\phi)$ the number of connected components of the nodal set $Z_\phi$, by $\Sigma(\phi)$ the set of all singular points of $\phi$. Then we have
\[
N(\phi) = C(\phi) +\sum_{p \in \Sigma(\phi)} (\mathrm{ord}_{x=p} \phi(x)-1) + O(1)
\]
\end{theorem}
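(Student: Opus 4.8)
The plan is to realize the nodal set $Z_\phi$ as a graph $G$ embedded on $M$ and to read off each term of the desired identity from the combinatorial data $(v,e,f,c)$ appearing in Lemma~\ref{lemma1}. First I would recall that, since $\phi$ solves $-\Delta\phi=\lambda\phi$, Bers' local expansion gives near any point $p$ where $\phi$ vanishes to order $k=\mathrm{ord}_{x=p}\phi(x)$ a nonzero leading homogeneous harmonic polynomial of degree $k$ (harmonic because $\lambda\phi$ is of lower order); in two dimensions such a polynomial is of $z^k$-type, so its zero set is $2k$ rays through the origin, and Cheng's structure theorem shows $Z_\phi$ is locally $C^1$-diffeomorphic to this model. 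In particular $Z_\phi$ has no isolated points, away from $\Sigma(\phi)$ it is a disjoint union of smooth arcs, and at each $p\in\Sigma(\phi)$ exactly $2k$ arcs emanate. The faces of $G$ are by definition the connected components of $M\setminus Z_\phi$, i.e.\ the nodal domains, so $f=N(\phi)$, and the number of connected components of $G$ is $c=C(\phi)$.

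Next I would make $Z_\phi$ into a bona fide graph and compute $v-e$. I place a vertex at every singular point and take the arcs between them as edges, so the singular points account for $|\Sigma(\phi)|$ vertices and, by the handshake lemma applied with $\deg p=2\,\mathrm{ord}_{x=p}\phi(x)$, for $\sum_{p\in\Sigma(\phi)}\mathrm{ord}_{x=p}\phi(x)$ edges. Closed nodal curves carrying no singular point contain no vertex yet; on each such curve I add one auxiliary vertex and view the curve as a single loop, which increases $v$ and $e$ by one apiece and so cancels in $v-e$ (and changes neither $f$ nor $c$). Hence
\[
v-e=|\Sigma(\phi)|-\sum_{p\in\Sigma(\phi)}\mathrm{ord}_{x=p}\phi(x)=-\sum_{p\in\Sigma(\phi)}\big(\mathrm{ord}_{x=p}\phi(x)-1\big).
\]

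Finally, I would invoke Lemma~\ref{lemma1}: for $G$ embedded on $M$ of genus $\mathfrak{g}$ we have $1\geq v-e+f-c\geq 1-2\mathfrak{g}$, so $v-e+f-c=O(1)$ with the implied constant depending only on the fixed surface $M$. Substituting $f=N(\phi)$, $c=C(\phi)$, and the expression for $v-e$ above yields $N(\phi)-C(\phi)-\sum_{p\in\Sigma(\phi)}\big(\mathrm{ord}_{x=p}\phi(x)-1\big)=O(1)$, which is the claim. The main obstacle is not the Euler-characteristic bookkeeping, which Lemma~\ref{lemma1} handles even when the faces fail to be disks, but rather justifying the local degree count $\deg p=2\,\mathrm{ord}_{x=p}\phi(x)$ and the absence of isolated zeros; this is precisely where the eigenfunction equation enters, through the harmonicity of the leading Taylor term and the resulting $z^k$-type nodal picture. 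I would also note that the finiteness of $\Sigma(\phi)$ and of each order of vanishing, needed so that all sums above are finite, follows from the standard finite-vanishing-order and unique-continuation results for eigenfunctions.
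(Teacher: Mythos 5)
Your proposal is correct and follows essentially the same route as the paper: give the nodal set a graph structure with vertices at singular points (plus auxiliary vertices on singularity-free closed curves), identify $f=N(\phi)$ and $c=C(\phi)$, use the degree-sum formula with $\deg p = 2\,\mathrm{ord}_{x=p}\phi(x)$ to get $e-v=\sum_{p\in\Sigma(\phi)}(\mathrm{ord}_{x=p}\phi(x)-1)$, and conclude by Lemma~\ref{lemma1}. The only difference is that you spell out the local structure theory (Bers expansion, Cheng's theorem) and the graph construction explicitly, whereas the paper delegates these to \cite{jz2016}.
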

\begin{proof}
As done in \cite{jz2016}, we may give a graph structure to the nodal set. In this context, we have $N(\phi) = f$, $C(\phi) = c$. To handle $v$ and $e$, observe that
\[
2e = \sum_{x \in V} deg x = 2v+\sum_{p \in \Sigma(\phi)} 2(\mathrm{ord}_{x=p} \phi(x)-1),
\]
hence
\[
e-v = \sum_{p \in \Sigma(\phi)} (\mathrm{ord}_{x=p} \phi(x)-1).
\]
Now the theorem follows by Lemma \ref{lemma1}.
\end{proof}
As a direct application, we give a short proof of an upper bound for the number of singular points of an eigenfunction on a surface, that is first obtained in \cite{df90} and \cite{dong92}:
\begin{corollary}
For a Laplacian eigenfunction $\phi$ on a surface with eigenvalue $\lambda$, we have
\[
\sum_{p \in \Sigma(\phi)} (\mathrm{ord}_{x=p} \phi(x)-1) =O(\lambda),
\]
as $\lambda \to \infty$.
\end{corollary}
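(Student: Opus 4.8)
The plan is to read the corollary as an immediate consequence of the preceding theorem, once $N(\phi)$ is controlled from above. Starting from the identity
\[
N(\phi) = C(\phi) + \sum_{p\in\Sigma(\phi)} (\mathrm{ord}_{x=p}\phi(x)-1) + O(1),
\]
I would simply isolate the singular-point sum,
\[
\sum_{p\in\Sigma(\phi)} (\mathrm{ord}_{x=p}\phi(x)-1) = N(\phi) - C(\phi) + O(1).
\]
Since $C(\phi)$, the number of connected components of the nodal set, is nonnegative, discarding the term $-C(\phi)$ turns this exact formula into the one-sided bound
\[
\sum_{p\in\Sigma(\phi)} (\mathrm{ord}_{x=p}\phi(x)-1) \le N(\phi) + O(1),
\]
which is all that is required. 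Note that the left-hand side is genuinely a sum of nonnegative terms: at a singular point both $\phi$ and its gradient vanish, so the order of vanishing is at least $2$ and each summand is at least $1$; hence the inequality controls the quantity of interest in the correct direction.

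It then remains to show $N(\phi) = O(\lambda)$. For this I would invoke Courant's nodal domain theorem \eqref{courant}: if $\phi$ is an eigenfunction with eigenvalue $\lambda = \lambda_j$, then $N(\phi) \le j$ (in the presence of multiplicity one takes $j$ to be the largest index attaining $\lambda$). Combining this with the two-dimensional Weyl law on the surface $M$, which gives $j = O(\lambda)$ as $\lambda \to \infty$, yields $N(\phi) \le j = O(\lambda)$, and the corollary follows at once.

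The point worth stating carefully is that the entire analytic content sits in the theorem and in Lemma \ref{lemma1}; the corollary contributes only the elementary observation that dropping $-C(\phi) \le 0$ converts the exact formula into an inequality, after which the classical Courant--Weyl bound closes the argument. In particular I do not expect any real obstacle here: the nontrivial upper bound $O(\lambda)$ for the total order of vanishing at singular points, originally due to \cite{df90} and \cite{dong92}, emerges precisely because the number of nodal domains is already $O(\lambda)$ while the nodal-set components can only subtract from the singular-point contribution.
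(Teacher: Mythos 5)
Your proposal is correct and follows exactly the paper's own argument: apply the theorem's identity, discard the nonnegative term $C(\phi)$ to get $\sum_{p \in \Sigma(\phi)} (\mathrm{ord}_{x=p} \phi(x)-1) \leq N(\phi) + O(1)$, then bound $N(\phi)$ by Courant's nodal domain theorem combined with Weyl's law. The additional remarks you make (nonnegativity of each summand, handling eigenvalue multiplicity by taking the largest index) are sensible clarifications of details the paper leaves implicit, but the route is the same.
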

\begin{proof}
Courant's general nodal domain theorem implies that $N(\phi_j) \leq j$, and Weyl's law implies that $j$ is asymptotically proportional to $\lambda_j$. Therefore we have
\[
\sum_{p \in \Sigma(\phi)} (\mathrm{ord}_{x=p} \phi(x)-1) \leq N(\phi) + O(1) = O(\lambda).
\]
\end{proof}
Another application is the following
\begin{corollary}
For an eigenfunction on a generic surface, we have
\[
N(\phi) = C(\phi) + O(1)
\]
as $\lambda \to \infty$.
\end{corollary}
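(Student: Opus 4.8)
The plan is to read the result off directly from the preceding Theorem, which states that
\[
N(\phi) = C(\phi) + \sum_{p \in \Sigma(\phi)} (\mathrm{ord}_{x=p}\phi(x) - 1) + O(1).
\]
Thus it suffices to show that for a generic metric on $M$ the singular sum appearing here is $O(1)$; in fact I expect it to vanish identically, because for a generic metric no eigenfunction will possess any singular point at all. Once $\Sigma(\phi) = \emptyset$ the sum is empty, and the Theorem gives $N(\phi) = C(\phi) + O(1)$ with no further work.

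The genericity input I would invoke is Uhlenbeck's theorem on generic properties of eigenfunctions: for a residual set of Riemannian metrics (in the $C^\infty$ topology) every Laplace eigenvalue is simple and every eigenfunction is a Morse function. The key additional point I need is that $0$ is a regular value of each eigenfunction $\phi$. A singular point is by definition a point where $\phi$ and $\nabla\phi$ vanish simultaneously; on the two-dimensional $M$ this imposes three independent scalar conditions at a single point, so one expects no solutions for a generic metric. Concretely, when $0$ is a regular value the nodal set $Z_\phi$ is a smooth embedded $1$-manifold, $\nabla\phi$ never vanishes along it, and hence $\Sigma(\phi) = \emptyset$ with $\mathrm{ord}_{x=p}\phi \equiv 1$ everywhere on $Z_\phi$. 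I would therefore establish, or cite, that the metrics for which $0$ is a regular value of every eigenfunction form a residual set, and restrict attention to such metrics.

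The main obstacle is precisely making ``for a generic metric, $0$ is a regular value of every eigenfunction'' rigorous. I would treat the pair $(\lambda_k, \phi_k)$ as depending on the metric $g$, which is legitimate on the open dense set where $\lambda_k$ is simple (so that $\phi_k$ is determined up to sign and varies smoothly in $g$), and then run a Sard--Smale transversality argument in the Banach manifold of metrics: the evaluation map $(g,x) \mapsto (\phi_k(x), \nabla\phi_k(x))$ should be shown to be transverse to $\{0\}$, whose preimage then has negative expected dimension and is therefore empty for a residual set of $g$. Two technical subtleties remain here: first, verifying that perturbing $g$ genuinely moves the $1$-jet $(\phi_k, \nabla\phi_k)$ in all directions at a prospective singular point, which is the content of Uhlenbeck's perturbation lemmas; and second, passing from one fixed index $k$ to all indices simultaneously, which is handled by taking the countable intersection of the corresponding residual sets and invoking the Baire property. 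Granting these, every eigenfunction on such a surface satisfies $\Sigma(\phi) = \emptyset$, the singular sum vanishes, and the corollary follows at once from the Theorem.
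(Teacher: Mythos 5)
Your proposal is correct and follows essentially the same route as the paper: the paper's proof consists precisely of combining the preceding Theorem with Uhlenbeck's genericity result \cite{uh76} that eigenfunctions on a generic surface have no singular points, so the singular sum vanishes. The Sard--Smale transversality argument you sketch is simply the content of Uhlenbeck's theorem itself, which the paper (reasonably) cites rather than reproves.
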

\begin{proof}
It is proven in \cite{uh76} that eigenfunctions on a generic surface do not have any singular points.
\end{proof}
In particular, as done in previous sections, we may bound the number of nodal domains by counting the number of connected components of the nodal set on generic surfaces. However, the input that is missing in this case is the sharp upper bound for the number of intersection between the nodal set and a curve.

\bibliographystyle{alpha}
\bibliography{bibfile}

\end{document}